\documentclass[11pt,reqno]{amsart}
\usepackage{amsmath,amsfonts,amsthm,amscd,amssymb,graphicx}
\usepackage{epstopdf}
\numberwithin{equation}{section}
\usepackage{fullpage}
\usepackage{color}
\usepackage{enumitem}

\numberwithin{equation}{section}

\newtheorem{theorem}{Theorem}[section]

\newtheorem{lemma}{Lemma}[section]

\newtheorem{example}{Example}[section]

\theoremstyle{definition}
\newtheorem{definition}{Definition}[section]
\newtheorem{remark}{Remark}[section]

\newcommand{\RR}{{\mathbb R}}

\newcommand\bp{\begin{pmatrix}}
\newcommand\ep{\end{pmatrix}}
\newcommand\be{\begin{equation}}
\newcommand\ee{\end{equation}}

\renewcommand\a{\alpha}
\renewcommand\b{\beta}

\title{Traveling waves for  combustion reaction-diffusion-convection equations: \\ the full range of wave speeds}

\author{Pavel Dr\'{a}bek}
\address{Department of Mathematics and NTIS, Faculty of Applied Sciences, University of West Bohemia, Univerzitní 8, Plze\'{n} 30100, Czech Republic}
\email{pdrabek@kma.zcu.cz}

\author{Soyeun Jung}
\address{Division of International Studies, Kongju National University, Gongju-si, Chungcheongnamdo, South Korea}
\email{soyjung@kongju.ac.kr}

\author{Eunkyung Ko}
\address{Department of Mathematics, College of Natural Sciences, Keimyung University, Daegu  42601, South Korea}
\email{ekko@kmu.ac.kr}

\author{Michaela Zahradn\'{i}kov\'{a}}
\address{Department of Mathematics, Faculty of Science, University of South Bohemia,  Brani\v{s}ovsk\'{a}~1760, 370~05~\v{C}esk\'{e}~Bud\v{e}jovice, Czech Republic}
\email{zahram05@prf.jcu.cz}

\begin{document}

\begin{abstract} We consider traveling wave solutions to a reaction--diffusion--convection equation with a combustion-type reaction term. While a necessary condition for the existence of traveling waves is $c \geq H^*:=\sup_{0<u \leq \theta} \big(-\frac{1}{u} \int_0^u h(\sigma)\,d\sigma \big)$, where $c$ denotes the wave speed, $\theta\in(0,1)$ the ignition threshold, and $h$ the convective term, the available results in \cite{DZ25,MM03} establish existence and nonexistence only under the restriction $c \geq -\min_{u\in[0,1]} h(u)$. 
In this note, we close this gap by covering the entire range $c\ge H^*$.
\end{abstract}

\date{\today}

\maketitle



{\bf  Key words}:  traveling waves, combustion reaction,  reaction-diffusion-convection equations




\section{Introduction}

We study traveling wave solutions to the reaction-diffusion-convection equation
\be
\label{rdc_com}
v_t=\left[d(v)|v_x|^{p-2}v_x\right]_x+h(v)v_x+g(v), \quad (x,t) \in \mathbb{R} \times [0,+\infty).
\ee
Here $p>1$, $h \in C[0,1]$ is a convective velocity, $d \in C^1(0,1)$ is a diffusion coefficient which may degenerate or become singular at one or both endpoints, and $g \in C[0,1]$ is a combustion-type reaction term, i.e., 
\be \label{r}
\text{$g(v)=0$ in $[0,\theta]$}, \quad  \text{$g(v)>0$ in $(\theta, 1)$}, \quad g(1)=0
\ee
for some $\theta \in (0,1)$. 

Reaction–diffusion–convection equations with combustion-type nonlinearities arise in models of flame propagation and interface dynamics, and traveling wave solutions play a fundamental role in their analysis. See \cite{DZ25, MM03} and the references therein for a detailed discussion of the model.

When seeking a traveling wave solution, we formally set $v(x,t)=u(x-ct)$, where $u$ is the wave profile and $c$ denotes the wave speed. Substituting this form, with $\xi=x-ct$, into \eqref{rdc_com} we obtain the profile equation
\be\label{pe}
 (d(u)|u'|^{p-2} u')' +(c+h(u) )u' +g(u) =0,
\ee
where $\prime=\frac{d}{d\xi}$. The objective of this note is to study monotone solutions of \eqref{pe} satisfying the boundary conditions
\be \label{bc}
\lim_{\xi \rightarrow -\infty} u(\xi) =1 \quad \mbox{and} \quad \lim_{\xi \rightarrow \infty} u(\xi) = 0.
\ee
Throughout the paper, we denote 
\be
h_m:=\min_{u \in [0,1]}h(u),  \quad  h_M:=\max_{u \in [0,1]}h(u), \quad  \quad H(u):=\int_0^u h(\sigma) d\sigma.
\notag
\ee

The existence and nonexistence of such solutions have been recently studied in \cite{DZ25}, extending and generalizing the results of \cite{MM03}, which were investigated for $p=2$, to the present setting. However, their results in \cite{DZ25, MM03} are obtained under the restriction 
\be \label{res}
c \geq -h_m,
\ee
where the effective speed $c+h(u)$ remains nonnegative for all $u \in[0,1]$, so that the convective effect never acts against the propagation of the traveling wave.

On the other hand, due to the combustion structure $g(u)=0$ in $[0,\theta]$, one obtains the necessary condition (see \eqref{nc1})
\be \label{nc}
c \geq H^*, \quad H^*:=\sup_{0<u \leq \theta} \Big(-\frac{H(u)}{u} \Big). 
\ee
Since $H^* \leq -h_m$, the restriction \eqref{res} imposed in \cite{DZ25, MM03} automatically implies \eqref{nc}. For this reason, the quantity $H^*$ does not explicitly appear in their analysis, and neither existence nor nonexistence results are available for wave speeds in the range
\be \label{regime}
H^* \leq c < -h_m.
\ee
The main purpose of the present note is to fill this gap. More precisely, we derive a sufficient condition for nonexistence for arbitrary wave speeds, as well as a sufficient condition ensuring that traveling waves may still exist even when the convective effect locally opposes the propagation of the wave.

As shown in \cite{DZ25}, the monotonicity of solutions to \eqref{pe}–\eqref{bc} is guaranteed. As a consequence, the problem can be reduced to a first-order boundary value problem (see \eqref{fode}), and all arguments can be carried out at this level. The restriction \eqref{res} imposed in \cite{DZ25, MM03} provides several technical advantages in the analysis of the associated first-order Cauchy problem, including uniqueness and monotonicity properties. In contrast, in our previous work on the monostable and bistable case (\cite{DJKZ24, DJKZ26}), we showed the first-order problem can still be analyzed even when the sign of $c+h(u)$ changes. By adapting this approach to the present combustion setting, we are able to remove the restriction \eqref{res}.

In recent years, traveling wave solutions for reaction–diffusion(–convection) equations with discontinuous density-dependent coefficients have received considerable attention (see \cite{DJKZ24,DJKZ26,GM26} and references therein). In the present combustion setting, however, we work under the same regularity assumptions as in \cite{DZ25}, since no substantial additional difficulties arise from lower regularity as long as the convection term $h$ is bounded and continuous at $0$. Accordingly, the sole focus of this paper is to fill the gap in the range of wave speeds. 

\section{Preliminaries}

In this section, we briefly recall several preliminaries from \cite{DJKZ24,DJKZ26,DZ25} that will be used throughout the paper, including the notion of solutions, the equivalent first-order boundary value problem, and the associated terminal value problem.

First, solutions of \eqref{pe}--\eqref{bc} are understood in the following sense.

\begin{definition}[\cite{DZ25}] \label{defsol} A continuous function $ u : \mathbb{R} \rightarrow [0,1]$ is a solution of  \eqref{pe}--\eqref{bc} if 
\begin{enumerate}[label=(\alph*)]
\item  $u \in C^1 (I_u)$, where $I_u := \{ \xi \in \mathbb{R}: 0 <u(\xi) <1  \}$, and \eqref{pe} holds at every $\xi \in I_u$;
\item the function $\xi \rightarrow d(u(\xi))|u'(\xi)|^{p-2} u'(\xi)$ is continuous on $\mathbb{R}$ 
and $d(u(\xi))|u'(\xi)|^{p-2} u'(\xi) \rightarrow 0$ as $u(\xi) \rightarrow 0$ and $u(\xi) \rightarrow 1$;
\item (boundary condition) $u(\xi) \rightarrow 1$ as $\xi \rightarrow -\infty$ and $u(\xi) \rightarrow 0$ as $\xi \rightarrow +\infty$. 
\end{enumerate}
\end{definition}
It was shown in \cite{DZ25} that every solution is strictly decreasing on the interval where $0<u<1$. Following \cite{DJKZ24, DJKZ26, DZ25}, we introduce
\be
y(u):=w(u)^{p'},  \quad   w(u):=-d(u)|u'|^{p-2}u'. 
\notag
\ee
Then the problem \eqref{pe}--\eqref{bc} reduces to the first-order boundary value problem
\be\label{fode}
\begin{cases}
y'(u) = p' \left[ (c+h(u)) (y^+ (u))^{1/p} -f(u)  \right],  \quad u \in (0,1), \\
 y(0)=0=y(1), 
\end{cases}
\ee
where  $y^+(u):=\max \{y(u),0\}$, $p'=p/(p-1)$, and $f(u) = d(u)^{p'-1}g(u)$.

According to \cite[Propostion 3.2]{DZ25}, solutions $u$ of \eqref{pe}--\eqref{bc} are in one-to-one correspondence with positive solutions $y \in C^1(0,1) \cap C[0,1]$ of the first-order problem \eqref{fode}. Therefore, throughout this note, we study the existence and nonexistence of traveling waves entirely through the analysis of positive solutions to \eqref{fode}.

Since $f \equiv 0$ on $[0, \theta]$, any positive solution $y$ of \eqref{fode} satisfies
\be
y'(u) = p' (c+h(u)) y(u)^{1/p}, \quad u \in (0,\theta), 
\notag
\ee
which is separable. Using $y(0)=0$, we obtain 
\be\label{fone1}
y(u)^{1/p'}=cu+H(u), \quad u \in [0, \theta].
\ee
Since $y>0$ on $(0,1)$, 
\be \label{nc1}
cu+H(u)>0 \quad \text{for all } u \in(0,\theta],
\ee
from which the necessary condition \eqref{nc} follows immediately. We also note that letting $u \to0+$ in \eqref{nc1} yields $c \geq -h(0)$, which is the necessary condition mentioned in \cite{DZ25, MM03}. While those works are restricted to the range \eqref{res}, our analysis focuses on wave speeds below $-h_m$. Consequently, the sharper condition \eqref{nc} plays a crucial role in the present work.

Lastly, recalling $f>0$ on $(\theta, 1)$, we consider the associated terminal value problem (TVP)
\be \label{tvp}
\begin{cases}
y'(u)
=
p'\left[(c+h(u))(y^+(u))^{1/p}-f(u)\right],
\quad u \in(0,1),\\
y(1)=0,
\end{cases}
\ee
and denote its solution by $\hat y_c$ for each $c \in \RR$. According to \cite{DJKZ24,DJKZ26}, $\hat y_c$ is positive on $(\theta,1)$, unique on every interval where it remains positive, and depends continuously on $c$. Moreover, if $c_1<c_2$, then
\be \label{mon}
\hat y_{c_1}(u)>\hat y_{c_2}(u)
\ee
whenever both solutions remain positive. In addition, under the assumption
\be \label{mu_com}
\mu:=\sup_{u \in (\theta, 1)} \frac{f(u)}{(u-\theta)^{p'-1}} <\infty,
\ee
it was shown in \cite{DJKZ24, DJKZ26} that there exists a threshold wave speed
\be \label{cBcom}
c_B\in\big[-h(\theta),\, -h_m+(p')^{1/p'}p^{1/p}\mu^{1/p'}\big]
\ee
such that the problem \eqref{tvp} on $[\theta,1]$, together with the condition $y(\theta)=0$, admits a unique positive solution if and only if 
\be \label{cB}
c \geq c_B.
\ee
The existence of this threshold will play a crucial role in proving the nonexistence of positive solutions to \eqref{fode}. Indeed, if $c \geq c_B$, then $\hat y_c(\theta)=0$, and hence no positive solution of \eqref{fode} can exist on $(0,1)$.

\section{Nonexistence result}

We begin with the nonexistence of traveling wave solutions to \eqref{pe}--\eqref{bc} under the necessary condition \eqref{nc}, which in turn implies nonexistence for all wave speeds.

\begin{theorem}[Nonexistence] \label{Nonex}
Let \eqref{mu_com} hold and assume that
\be \label{nonex3}
H(\theta)  
\geq  -H^*+(1-\theta)(-h_m+(p')^{1/p'}p^{1/p}\mu^{1/p'}).
\ee
Then the problem \eqref{fode} has no positive solution for any $c\geq H^*$; in particular, it admits no positive solution for any $c \in \RR$. 
\end{theorem}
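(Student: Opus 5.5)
The plan is to split into the two regimes determined by the threshold $c_B$ from \eqref{cB}. By \eqref{cBcom}, $c_B\le K:=-h_m+(p')^{1/p'}p^{1/p}\mu^{1/p'}$. If $c\ge c_B$, then $\hat y_c(\theta)=0$, as recalled at the end of Section~2. So no positive solution of \eqref{fode} exists on $(0,1)$, and only speeds $H^*\le c<c_B$ remain.

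Fix $c\in[H^*,c_B)$ and suppose $y$ is a positive solution of \eqref{fode}. On $[\theta,1]$ it coincides with $\hat y_c$, by uniqueness while positive. By \eqref{fone1}, the left side is then forced:
\[
\hat y_c(\theta)^{1/p'}=c\theta+H(\theta).
\]
The idea is to bound $\hat y_c(\theta)^{1/p'}$ from above by comparison with $\hat y_{c_B}$, which vanishes at $\theta$. Set $z_c:=\hat y_c^{1/p'}$, which is positive on $(\theta,1)$. Since $y^{1/p}=z^{p'-1}$, a direct computation from \eqref{tvp} gives
\[
z_c'=c+h(u)-\frac{f(u)}{z_c^{p'-1}}\quad\text{on }(\theta,1).
\]
Let $w:=z_c-z_{c_B}$. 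By \eqref{mon}, $z_c>z_{c_B}>0$ on $(\theta,1)$. Since $f>0$ there, we get
\[
w'=(c-c_B)+f\Big(\frac{1}{z_{c_B}^{p'-1}}-\frac1{z_c^{p'-1}}\Big)>c-c_B .
\]
Integrating from $\theta$ to $1$ and using $w(1)=0$ and $z_{c_B}(\theta)=0$, we expect the strict bound $z_c(\theta)<(c_B-c)(1-\theta)$.

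Combining this with the forced value yields $c\theta+H(\theta)<(c_B-c)(1-\theta)$, that is,
\[
c+H(\theta)<c_B(1-\theta).
\]
On the other hand, $c\ge H^*$, $c_B\le K$ and hypothesis \eqref{nonex3} give $c+H(\theta)\ge H^*+H(\theta)\ge(1-\theta)K\ge(1-\theta)c_B$, a contradiction. The final claim for all $c\in\RR$ follows because \eqref{nc} is necessary.

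The main obstacle is justifying the comparison on the half-open interval. Integrability of $w'$ near $u=\theta$ needs care, because $f/z_{c_B}^{p'-1}$ may blow up there. I would handle this by integrating over $[\theta+\varepsilon,1-\varepsilon]$, using only the strict lower bound for $w'$, and then letting $\varepsilon\to0$ using continuity of $z_c$ and $z_{c_B}$ up to the endpoints. Strictness then survives because the integrand exceeds $c-c_B$ on a fixed subinterval.
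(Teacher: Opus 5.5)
Your proof is correct and follows the paper's argument: you compare the integrated identity for $\hat y_c^{1/p'}$ with the one for $\hat y_{c_B}$ via \eqref{mon} to get $\hat y_c(\theta)^{1/p'}<(1-\theta)(c_B-c)$, and then contradict \eqref{fone1} using \eqref{cBcom} and \eqref{nonex3}. Subtracting the two equations to work with $w=z_c-z_{c_B}$, and truncating to $[\theta+\varepsilon,1-\varepsilon]$, only makes explicit the integrability of $f/\hat y_{c_B}^{1/p}$ near $\theta$, which the paper uses implicitly in \eqref{non2}.
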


\begin{proof}
We argue by contradiction. Suppose that \eqref{fode} admits a positive solution $y_c$ for some $c\geq H^*$. Since $y_c$ also solves the TVP \eqref{tvp} on $[\theta,1]$, the threshold property \eqref{cB} implies that necessarily $c<c_B$. Divide \eqref{fode} by $y_c(t)^{1/p}$ and integrate over $(\theta,1)$ to obtain
\be \label{non1}
y_c(\theta)^{1/p'} =-c(1-\theta)-H(1)+H(\theta) +\int_{\theta}^1 \frac{f(u)}{y_c(u)^{1/p}}\,du.
\ee
Applying the same identity to $\hat y_{c_B}$ and using $\hat y_{c_B}(\theta)=0$, we have 
\be \label{non2}
\int_{\theta}^1 \frac{f(u)}{\hat y_{c_B}(u)^{1/p}}\,du = c_B(1-\theta)+H(1)-H(\theta).
\ee
Since $c<c_B$ and $f>0$ on $(\theta, 1)$, the monotonicity \eqref{mon} yields
\be
\int_{\theta}^1 \frac{f(u)}{y_c(u)^{1/p}}\,du = \int_{\theta}^1 \frac{f(u)}{\hat y_c(u)^{1/p}}\,du < \int_{\theta}^1 \frac{f(u)}{\hat y_{c_B}(u)^{1/p}}\,du. 
\notag
\ee
Using \eqref{cBcom}, $c \geq H^*$, and the assumption \eqref{nonex3},  it follows from \eqref{non1}--\eqref{non2} that 
\be \label{upper_y}
y_c(\theta)^{1/p'} < (1-\theta)(c_B-c) \leq (1-\theta)\big(-h_m+(p')^{1/p'}p^{1/p}\mu^{1/p'}-H^*\big) \leq H^*\theta +H(\theta). 
\ee

On the other hand, again using $c \geq H^*$, we deduce from \eqref{fone1} that 
\be 
y_c(\theta)^{1/p'} \geq H^*\theta+H(\theta),
\notag
\ee
which contradicts \eqref{upper_y}. This completes the proof. 
\end{proof}

\begin{remark}\label{recomnon}
In \cite[Theorem~2.4]{DZ25}, nonexistence is proved for $c \geq -h_m$  under the assumptions
\be \label{assDZnon}
\int_0^1 f(u) \, du < \infty \quad \text{and} \quad H(\theta) > \theta h_m + \Big(p'\int_0^1 f(u) \, du \Big)^{1/p'}. 
\ee
Since our result yields nonexistence for any real value of $c$, the assumptions \eqref{mu_com} and \eqref{nonex3} imply \eqref{assDZnon}. Indeed, by \eqref{mu_com},
\be
\int_0^1 f(u)\,du = \int_{\theta}^1 \frac{f(u)}{(u-\theta)^{p'-1}}(u-\theta)^{p'-1}\,du \leq \mu \int_{\theta}^1 (u-\theta)^{p'-1}\,du = \frac{\mu}{p'}(1-\theta)^{p'} < \infty. 
\notag
\ee
Since $p>1$ and $H^* \leq -h_m$, it follows from \eqref{nonex3} that
\be
\theta h_m+\Big(p'\int_0^1 f(u)\,du\Big)^{1/p'}
< \theta h_m+(1-\theta)(p')^{1/p'}p^{1/p}\mu^{1/p'}-H^*-h_m 
\le H(\theta).
\notag
\ee
\end{remark}

\begin{example}
Theorem \ref{Nonex} is particularly relevant when $H^*<-h_m$, since otherwise the nonexistence result in \cite{DZ25} already applies. For example, let $p=p'=2$, $\theta=\frac12$, $d(u) =1$ on $[0,1]$,  
\be
g(u)=
\begin{cases}
0, & 0\le u \le \frac12,\\
\frac1{16}\left(u-\frac12\right)(1-u), & \frac12<u \le 1,
\end{cases}
\qquad
\text{and}
\qquad 
h(u)=
\begin{cases}
8u, & 0\le u\le \frac12,\\
9-10u, & \frac12<u \le 1.
\end{cases}
\notag
\ee
Then $H^*=0 < 1=-h_m$, and \eqref{nonex3} holds. Thus no traveling waves exist for any wave speed.
\end{example}

\section{Existence result}

Recall that the solution $\hat y_c$ of \eqref{tvp} is positive on $(\theta,1)$.  For each $c \geq H^*$, let $(u_c,1)\subset(0,1)$ denote the maximal interval such that
\be
\hat y_c(u)>0 \quad  \text{on $(u_c,1)$}. 
\notag
\ee
In what follows, we regard $\hat y_c$ as a positive solution of \eqref{tvp} restricted to the interval $(u_c,1)$:
\be \label{tvp_ex}
\begin{cases}
y'(u)
=
p'\left[(c+h(u))y(u)^{1/p}-f(u)\right],
\quad u \in(u_c,1),\\
y(1)=0. 
\end{cases}
\ee

We now state the existence of positive solutions to \eqref{fode}, recalling $\kappa(p)$ introduced in \cite{DZ25}:
\begin{align}\label{kappa}
\kappa(p)
&=
\left\{
\begin{array}{ll}
1/(2^{p'-1}-1), & 1<p<2, \\[1ex]
1, & p=2, \\[1ex]
p'/\big(p'-1+\hat{\kappa}(p')\big), & p>2,
\end{array}
\right.
\qquad 
\hat{\kappa}(r)
=
\dfrac{1+r(r-1)^{\frac{1}{r-2}}+(r-1)^{\frac{r}{r-2}}}
{\big(1+(r-1)^{\frac{1}{r-2}}\big)^r}.
\notag
\end{align}

\smallskip

\begin{theorem}[Existence] \label{ex_tvp}
Let $f \in L^1(0,1)$ and assume that
\be \label{concomex_tvp}
H^*+h_M \leq \Big(\kappa(p) \int_0^1 f(u)\, du \Big)^{1/p'}.
\ee
Then there exists a unique $c^* > H^*$ such that the problem \eqref{fode} has a unique  positive solution. Moreover, $c^*$ satisfies
\be \label{c^*}
c^* \leq \max \Big\{
\frac{1}{\theta}
\Big[
\Big(
p'\int_0^1 f(u) \, du
\Big)^{1/p'}
- H(\theta)
\Big], ~ -h_m \Big\}.
\ee
\end{theorem}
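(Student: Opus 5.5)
The strategy is a shooting argument in $c$. I match the explicit left branch \eqref{fone1} on $[0,\theta]$ with the terminal solution $\hat y_c$ of \eqref{tvp_ex} at $u=\theta$. For $c\ge H^*$, the left branch $z_c(u):=(cu+H(u))^{p'}$ is positive on $(0,\theta]$; for $c=H^*$ positivity can fail somewhere, so I take $c>H^*$ strictly. A positive solution of \eqref{fode} exists exactly when $\hat y_c(\theta)>0$ and $\hat y_c$ stays positive on $(0,\theta]$ with $\hat y_c(\theta)=z_c(\theta)$. Indeed, on $(0,\theta]$ the equation reduces to the separable one, and uniqueness holds while the solution is positive, so $\hat y_c$ coincides with $z_c$ there. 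So I would study the mismatch function
\[
\Phi(c):=\hat y_c(\theta)^{1/p'}-\big(c\theta+H(\theta)\big),\qquad c>H^*.
\]
Here $\hat y_c(\theta)$ is understood as $0$ when $u_c\ge\theta$, i.e.\ when $c\ge c_B$; when \eqref{mu_com} fails I would use a limiting version of the threshold, or simply the fact that $\hat y_c(\theta)$ is finite and continuous in $c$. The first factor is nonincreasing in $c$ by \eqref{mon} and continuous by continuous dependence. The second is strictly increasing. Hence $\Phi$ is continuous and strictly decreasing, which gives uniqueness of $c^*$ and of the solution at once.

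Next I would show $\Phi$ changes sign. For large $c$ I want $\Phi(c)<0$, and this will yield \eqref{c^*}. For $c\ge -h_m$ we have $c+h\ge0$, so on $(\theta,1)$ we get $y'\ge -p'f$. Integrating from $\theta$ to $1$ gives $\hat y_c(\theta)\le p'\int_0^1 f$, so
\[
\Phi(c)\le \Big(p'\int_0^1 f\Big)^{1/p'}-c\theta-H(\theta),
\]
which is negative once $c$ exceeds the bracket in \eqref{c^*}. So $\Phi(c)<0$ for $c$ larger than the maximum in \eqref{c^*}, which bounds $c^*$.

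The main obstacle is showing $\Phi(c)>0$ as $c\downarrow H^*$, i.e.\ that $\hat y_c(\theta)^{1/p'}>c\theta+H(\theta)$ near $c=H^*$. This is where \eqref{concomex_tvp} and $\kappa(p)$ enter. I would adapt the lower bound of \cite{DZ25}, which yields $\hat y_c(\theta)\ge \kappa(p)\int_\theta^1 f$ under $c+h\le0$. Since here $c+h$ may be positive, I would compare with the worst case $c+h\le c+h_M$. The estimate would pass through $w=\hat y_c^{1/p'}$, which satisfies $w'=(c+h)-f/w^{p'-1}$. The aim is an inequality of the form
\[
\hat y_c(\theta)^{1/p'}\ \ge\ \Big(\kappa(p)\int_0^1f\Big)^{1/p'}-(c+h_M)(1-\theta).
\]
I expect this to be the delicate step: it requires reworking the convexity inequality behind $\kappa(p)$ with a sign-changing drift.

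Granting this, at $c=H^*$ we need to compare the bound with $H^*\theta+H(\theta)$. Using $H(\theta)\le h_M\theta$, we have $H^*\theta+H(\theta)\le(H^*+h_M)\theta$, so it suffices that
\[
\Big(\kappa\int f\Big)^{1/p'}-(H^*+h_M)(1-\theta)\ \ge\ (H^*+h_M)\theta,
\]
and this is exactly \eqref{concomex_tvp}. For strict positivity near $H^*$ I would use that the inequality $H(\theta)\le h_M\theta$ is strict unless $h$ is constant on $[0,\theta]$, which is a degenerate case to be checked separately. Continuity then gives $\Phi>0$ for $c$ slightly above $H^*$. The intermediate value theorem produces $c^*>H^*$ with $\Phi(c^*)=0$. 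Finally, positivity of the glued function on $(0,\theta]$ follows from \eqref{nc1}, which holds since $c^*>H^*$.
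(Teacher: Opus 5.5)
\textbf{Verdict: there is a genuine gap, but it is fillable.} Your architecture is the paper's in compressed form. On $[u_c,\theta]$ the separable equation gives $\hat y_c(u)^{1/p'}=\Phi(c)+cu+H(u)$. Hence, for $c>H^*$, the sets $\{\Phi>0\}$ and $\{\Phi<0\}$ are exactly the paper's $\mathcal A$ and $\mathcal B$. Your large-$c$ estimate is the paper's proof that $\mathcal B\neq\emptyset$ and of the bound on $c^*$. Strict monotonicity of $\Phi$ replaces the argument that $\sup\mathcal A=\inf\mathcal B$. Finally, ``$\Phi(H^*)>0$'' is precisely Lemma~\ref{lem}.

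The gap is that this last statement is not proved (``granting this''), yet it is the only place where the hypothesis on $H^*+h_M$ enters. The route you sketch is also incomplete: the $w$-equation $w'=(c+h)-f/w^{p'-1}$ only bounds $w$ from \emph{above} in terms of $w(\theta)$. It can never give a lower bound for $w(\theta)$.

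\textbf{The missing step.} Also integrate the $y$-equation itself over $(\theta,1)$, where $f$ enters linearly, and feed the upper bound for $w$ into it. Set $a:=\hat y_c(\theta)^{1/p'}$ and $k:=c+h_M$. Then $k\ge 0$, because $H^*\ge -H(\theta)/\theta\ge -h_M$. You have $w(u)\le a+k(u-\theta)$ on $[\theta,1]$ and $\hat y_c^{1/p}=w^{p'-1}$, hence
\[
p'\int_\theta^1 f\,du=\hat y_c(\theta)+p'\int_\theta^1 (c+h)\,\hat y_c^{1/p}\,du\le a^{p'}+p'k\int_\theta^1\big(a+k(u-\theta)\big)^{p'-1}du=\big(a+k(1-\theta)\big)^{p'}.
\]
Thus $\hat y_c(\theta)^{1/p'}\ge\big(p'\int_0^1 f\big)^{1/p'}-(c+h_M)(1-\theta)$. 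This is stronger than your target, since $\kappa(p)\le 1<p'$, and no reworked convexity inequality with sign-changing drift is needed.

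At $c=H^*$ you have $H^*\theta+H(\theta)\le (H^*+h_M)\theta$. Combined with the hypothesis, this gives $\Phi(H^*)\ge \big(p'\int_0^1 f\big)^{1/p'}-\big(\kappa(p)\int_0^1 f\big)^{1/p'}>0$, because $\int_0^1 f>0$. So the degenerate case you postpone disappears.

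The paper reaches the same conclusion by contradiction, from the same two ingredients: its bound on $\hat y_{H^*}(\tau)^{1/p'}$, and the integrated identity with a mean-value step. It then closes the argument with the algebraic inequality from \cite{DZ25} that produces $\kappa(p)$.

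\textbf{Two minor points.}
\begin{enumerate}
\item Define $\Phi$ on $[H^*,\infty)$ rather than $(H^*,\infty)$. Since $f>0$ on $(\theta,1)$, $\hat y_c>0$ there for every $c$. So $\hat y_c(\theta)\ge 0$ always exists, and neither $c_B$ nor \eqref{mu_com} is needed.
\item Your uniqueness claim needs $\Phi(H^*)>0$ itself, not just the behaviour of $\Phi$ for $c>H^*$. A positive solution at $c=H^*$ is possible a priori when the supremum defining $H^*$ is not attained. It would force $\Phi(H^*)=0$, and only $\Phi(H^*)>0$ rules it out.
\end{enumerate}
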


\begin{remark} Our existence result is obtained through a TVP formulation as in \cite{DZ25}. However, if one adopts a forward initial value approach as in \cite{MM03}, then one may expect existence for $c^* \geq H^*$ under the corresponding strict inequality assumption in \eqref{concomex_tvp}.
\end{remark}


\smallskip

Before proving Theorem \ref{ex_tvp}, we first establish the following lemma, which is a modification of \cite[Lemma B.2]{DZ25} adapted to our setting.

\begin{lemma} \label{lem} If $f \in L^1(0,1)$ and \eqref{concomex_tvp} holds, then 
\be
\hat y_{H^*}(\theta)^{1/p'}>H^*\theta+H(\theta).
\ee
\end{lemma}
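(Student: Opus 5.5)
The plan is to compare $\hat y_{H^*}$ on the whole interval $[0,1]$ with the solution of an auxiliary TVP whose convective coefficient is the constant $K:=H^*+h_M$. This reduces the claim to a constant-coefficient statement of the type in \cite[Lemma B.2]{DZ25}, which is where $\kappa(p)$ enters.

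\emph{Step 1: reformulate the claim.} Suppose $\hat y_{H^*}$ stays positive on $(0,\theta]$. On $(0,\theta)$ we have $f\equiv0$, so $(\hat y_{H^*}^{1/p'})'=H^*+h(u)$ there, exactly as in the derivation of \eqref{fone1}. Integrating from $0$ to $\theta$ gives
\be
\hat y_{H^*}(\theta)^{1/p'}=\hat y_{H^*}(0)^{1/p'}+H^*\theta+H(\theta).
\notag
\ee
Hence the lemma is equivalent to showing that $\hat y_{H^*}$ remains positive on all of $(0,1)$ and satisfies $\hat y_{H^*}(0)>0$. (If $\hat y_{H^*}$ vanished at some $u_{H^*}\in(0,\theta]$, the same integration over $(u_{H^*},\theta)$ would give a quantity $\le H^*\theta+H(\theta)$, so positivity down to $0$ is really the content.)

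\emph{Step 2: comparison.} Let $z$ solve
\be
z'=p'\big[K(z^+)^{1/p}-f(u)\big],\qquad z(1)=0,
\notag
\ee
integrated backward from $u=1$. Since $H^*+h(u)\le K$ for all $u\in[0,1]$, at any point in $(0,1)$ where $\hat y_{H^*}=z>0$ we get $\hat y_{H^*}'\le z'$. Therefore, moving backward from $1$, $\hat y_{H^*}$ cannot cross below $z$, and $\hat y_{H^*}\ge z$ on every interval where $z>0$.

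Near $u=1$ both functions start at $0$, and the non-Lipschitz term $y^{1/p}$ could cause trouble. I would handle this with the integral form
\be
y(u)=p'\int_u^1\big[f-(c+h)y^{1/p}\big]\,ds
\notag
\ee
together with the uniqueness of $\hat y_c$ on positivity intervals recalled from \cite{DJKZ24,DJKZ26}. A shifted-endpoint approximation would also work: replace $z(1)=0$ by $z(1)=\varepsilon$, compare, and let $\varepsilon\to0$. So once $z>0$ on $(0,1)$ with $z(0)>0$, we obtain $\hat y_{H^*}\ge z>0$ on $(0,1)$ and $\hat y_{H^*}(0)\ge z(0)>0$, and Step 1 finishes the proof.

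\emph{Step 3: positivity of $z$ (main obstacle).} What remains is the constant-coefficient statement: if $K\le(\kappa(p)\int_0^1 f)^{1/p'}$, then the backward solution $z$ is positive on $(0,1)$ and $z(0)>0$. This is essentially \cite[Lemma B.2]{DZ25}, and I would follow its proof. Suppose $z$ hits zero first at some $u_0\in[0,1)$. Rewrite the equation in terms of $w=z^{1/p'}$, integrate over $(u_0,1)$, and estimate $\int K z^{1/p}$ against $\int f$ using the elementary inequalities that define $\kappa(p)$; separate treatments are needed for $1<p<2$, $p=2$, and $p>2$ (through $\hat\kappa$). This yields the contradiction $K>(\kappa(p)\int_{u_0}^1 f)^{1/p'}\ge\dots$. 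Since $f=0$ on $[0,\theta]$, we have $\int_{u_0}^1 f=\int_0^1 f$ for $u_0\le\theta$, so the hypothesis \eqref{concomex_tvp} is exactly what is needed.

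The delicate points are two. First, the original lemma may be stated with $-h_m$ or $c$ in place of $K$, or for a nonconstant coefficient, so I will re-derive it rather than cite it verbatim. Second, the equality case $K=(\kappa\int f)^{1/p'}$ must still give $z(0)>0$ strictly. If the estimate only gives $z(0)\ge0$ there, I would recover strictness from the strict inequality $H^*+h(u)<K$ on a set of positive measure. Failing that, I would argue directly on $[0,\theta]$, where $z^{1/p'}$ is affine with slope $K$: this forces $z(0)^{1/p'}=z(\theta)^{1/p'}-K\theta$, and I would check that this quantity is strictly positive.
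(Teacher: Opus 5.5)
Your proof is correct, but it is organized differently from the paper's. The paper works directly with $\hat y_{H^*}$. It assumes $\hat y_{H^*}(\theta)^{1/p'}\le\alpha:=H^*\theta+H(\theta)$ and integrates the equation for $\hat y_{H^*}^{1/p'}$ over $(\theta,\tau)$, using only $H^*+h\le H^*+h_M$, to get $\hat y_{H^*}^{1/p'}<\alpha+\beta$ on $(\theta,1)$. It then integrates the $y$-equation over $(\theta,1)$ with the mean value theorem, and closes with the algebra below (B.7) of \cite{DZ25} together with $\alpha+\beta\le H^*+h_M$. The coefficient bound thus enters only through integral inequalities, and the paper never has to order two solutions near the degenerate endpoint $u=1$.

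You instead prove a pointwise lower bound $\hat y_{H^*}\ge z$ and reduce to a model problem that is explicit on $[0,\theta]$. The cost is the comparison step, which you justify loosely. In particular, raising the terminal value to $z(1)=\varepsilon$ gives $z_\varepsilon>\hat y_{H^*}$ near $u=1$, which is the wrong ordering. The clean way is to work with $w=y^{1/p'}$. On $(\theta,1)$ one has $w'=a(u)-f\,w^{1-p'}$, and $w\mapsto -f\,w^{1-p'}$ is increasing. Hence $\hat y_{H^*}^{1/p'}-z^{1/p'}$ is nonincreasing on any interval where it is negative, so it could never return to $0$, as it must at $u=1$ at the latest. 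On $[0,\theta]$ the ordering persists backward because the difference has derivative $H^*+h-K\le 0$.

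What your route buys is a very short Step 3, with no case analysis in $p$. Suppose $z(u_0)=0$ and $z>0$ on $(u_0,1)$; necessarily $u_0\le\theta$. Then $w=z^{1/p'}$ satisfies $w'\le K$, so $w(u)\le K(u-u_0)$, strictly on $(\theta,1)$ because $f>0$ there. Integrating $z'$ over $(u_0,1)$ gives $\int_0^1 f=K\int_{u_0}^1 z^{1/p}<K^{p'}(1-u_0)^{p'}/p'\le K^{p'}/p'$. Here $K\ge 0$ because $H^*\ge -h(0)$, and $K=0$ would force $\int_0^1 f=0$.

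Next, $\kappa(p)\le 1<p'$ in all three cases. For $p>2$ this follows from $(1+s)^r\le 1+rs+s^r$ when $1<r<2$, i.e.\ $\hat\kappa\ge 1$. So the bound above contradicts \eqref{concomex_tvp}. The equality case is automatically strict, so your fallback arguments are unnecessary. In fact, using this pointwise bound instead of the mean value theorem (in your proof or in the paper's) shows the lemma already holds under the weaker hypothesis $H^*+h_M\le\bigl(p'\int_0^1 f\,du\bigr)^{1/p'}$.
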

\begin{proof}
To argue by contradiction, assume that 
\be \label{lemmaB.2_1}
\hat y_{H^*}(\theta)^{1/p'} \leq H^*\theta+H(\theta).
\ee
For a given $\tau \in (\theta, 1)$, dividing \eqref{tvp_ex} by $y_c(u)^{1/p}$ and integrating it with $c=H^*$ over $(\theta, \tau)$ gives 
\be \label{comtau_tvp}
\begin{split}
\hat y_{H^*}(\tau)^{1/p'}
& =\hat y_{H^*}(\theta)^{1/p'} +\int_{\theta}^{\tau} (H^*+h(u)) \,du-\int_{\theta}^{\tau} \frac{f(u)}{\hat y_{H^*}(u)^{1/p}} \, du \\
& < \hat y_{H^*}(\theta)^{1/p'} + (H^*+h_M)(1-\theta). 
\end{split}
\ee

On the other hand, integrating \eqref{tvp_ex} over $(\theta, 1)$ with $c=H^*$ and applying the mean value theorem, there exists $\tau^* \in (\theta, 1)$ such that 
\be
\begin{split}
0  
& \leq  \hat y_{H^*}(\theta)+ p' (H^*+h_M) \int_{\theta}^1\hat y_{H^*}(u)^{1/p}\,du-p'\int_{\theta}^1 f(u) \, du \\
& = \hat y_{H^*}(\theta)+p'\hat y_{H^*}(\tau^*)^{1/p}(H^*+h_M)(1-\theta)-p'\int_{\theta}^1 f(u)\, du. 
\notag
\end{split}
\ee  
It follows from \eqref{lemmaB.2_1} and \eqref{comtau_tvp} that 
\be \label{w2_tvp}
0  < (H^*\theta+H(\theta))^{p'} +p' \big(H^*\theta+H(\theta) + (H^*+h_M)(1-\theta) \big)^{p'-1} (H^*+h_M)(1-\theta) -p'\int_{0}^1 f(u) \,du.  
\notag
\ee
Here, we observe that both $\a:=H^*\theta+H(\theta)$ and $\b:=(H^*+h_M)(1-\theta)$ are nonnegative. The above inequality has exactly the same form as (B.7) in \cite{DZ25}, with $H(\theta)$ and $H(1)-H(\theta)$ replaced by $\a$ and $\b$, respectively. Therefore, arguing exactly as below (B.7), one obtains a contradiction to \eqref{concomex_tvp}. The key difference from \cite{DZ25} is that
\be 
0 \leq \a+\b=\int_0^{\theta} (H^*+h(u))\,du+\int_{\theta}^1 (H^*+h_M)\,du \leq H^*+h_M,
\notag
\ee
which yields the left--hand side of \eqref{concomex_tvp}. In contrast, the quantity $H(1)=H(\theta)+(H(1)-H(\theta))$ appears in \cite{DZ25}. 
\end{proof}

\smallskip

We are now ready to prove Theorem \ref{ex_tvp}.

\begin{proof}[The proof of Theorem  \ref{ex_tvp}] 

Following the strategy of \cite{MM03}, but using a bacward terminal value approach instead of the forward initial value formulation, we define two sets
\be
\mathcal A:=\{ c  \geq H^* : u_c =0, ~ \hat y_c(0)>0\}, \quad 
\mathcal B = \{ c \geq H^*: u_c >0, ~\hat y_c(u_c)=0 \}.
\notag
\ee
Thanks to the continuity of $\hat y_c$ and the continuous dependence of $\hat y_c$ on $c$, both $\mathcal A$ and $\mathcal B$ are open. Our strategy is to show that $\sup \mathcal A=\inf \mathcal B$. We also note that if $c \notin \mathcal A\cup\mathcal B$, then $\hat y_c$ is a positive solution of \eqref{fode}. 

We first prove that both $\mathcal A$ and $\mathcal B$ are nonempty. Assume that $\mathcal B=\emptyset$. Then, for every $c\geq H^*$, we have $u_c=0$ and $\hat y_c(0)\geq0$. 
Dividing \eqref{tvp_ex} by $y_c(u)^{1/p}$ and integrating over $[0,\theta]$ yields
\be
\hat y_c(\theta)=\big(\hat y_c(0)^{1/p'}+c\theta+H(\theta)\big)^{p'}\geq (c\theta+H(\theta))^{p'}.
\notag
\ee
On the other hand, for $c>-h_m$, integrating \eqref{tvp_ex} over $[\theta,1]$ gives
\be
\hat y_c(\theta) = -p'\int_{\theta}^1 (c+h(u)) \hat y_c(u)^{1/p}\,du + p'\int_{\theta}^1 f(u)\,du < p'\int_{\theta}^1 f(u)\,du.
\notag
\ee
Consequently,
\be
(c\theta+H(\theta))^{p'} < p'\int_{\theta}^{1} f(u)\,du,
\notag
\ee
which is impossible for sufficiently large $c$ satisfying
\be \label{clarge}
c > -h_m \quad \text{and} \quad 
c \geq  \frac{1}{\theta} \left[ \left(p'\int_{\theta}^{1} f(u)\,du\right)^{1/p'} -H(\theta) \right].
\ee
This contradiction proves that $\mathcal B\neq\emptyset$.

We now claim $H^* \in \mathcal A$ by proving $\hat y_{H^*}(u_{H^*})>0$. Dividing \eqref{tvp_ex} by $y_c(u)^{1/p}$ and integrating over $[u_{H^*},\theta]$ gives
\be
\hat y_{H^*}(\theta)^{1/p'}-\hat y_{H^*}(u_{H^*})^{1/p'}=H^*(\theta-u_{H^*})+H(\theta)-H(u_{H^*}). 
\ee
Hence, by Lemma \ref{lem} and the fact that $u_{H^*}\in[0,\theta]$,
\be
\hat y_{H^*}(u_{H^*})^{1/p'} > H^* u_{H^*}+H(u_{H^*}) \geq 0,
\ee
which implies that $u_{H^*}=0$ and $H^* \in \mathcal A$. In particular, $\mathcal A\neq\emptyset$.

Since $\hat y_c$ is strictly decreasing with respect to $c$, it follows that $\sup \mathcal A\leq \inf \mathcal B$. Suppose that $\sup \mathcal A<\inf \mathcal B$. Then there exist $c_1<c_2$ such that $c_1,c_2\notin\mathcal A\cup\mathcal B$. Hence, both $\hat y_{c_1}$ and $\hat y_{c_2}$ are positive solutions of \eqref{fode}, and thus $\hat y_{c_2}(u)<\hat y_{c_1}(u)$ on $(0,1)$. However, this contradicts \eqref{fone1}, since
\be
\hat y_{c_1}(u)=(c_1u+H(u))^{p'}<(c_2u+H(u))^{p'}=\hat y_{c_2}(u) \quad \text{on $(0,\theta)$}. 
\ee
Therefore, $\sup \mathcal A=\inf \mathcal B$. Since $\mathcal A$ and $\mathcal B$ are open, the critical value $c^*:=\sup \mathcal A=\inf \mathcal B$ satisfies $c^*\notin\mathcal A\cup\mathcal B$. Hence, \eqref{fode} has a unique positive solution $y_c$ if and only if $c=c^*$.

If $c^*$ is sufficiently large so as to satisfy \eqref{clarge}, then necessarily $c^*\in\mathcal B$ by the same argument used to prove that $\mathcal B\neq\emptyset$. Since $c^*\notin\mathcal B$, the estimate \eqref{c^*} follows.
\end{proof}

\begin{example}
The novelty of our existence result lies in showing the possibility of traveling waves with wave speeds in the range \eqref{regime}. More precisely, the assumption \eqref{concomex_tvp} suggests that if the overall adverse convective effect remains sufficiently weaker than the propagation mechanism generated by the reaction--diffusion structure, then traveling waves can still persist even in regions where the convective effect acts against the propagation direction. Such a situation indeed occurs when $f$ and $h$ simultaneously satisfy the existence condition \eqref{concomex_tvp} and the nonexistence condition \eqref{assDZnon} from \cite{DZ25}, the latter guaranteeing nonexistence for all $c\ge -h_m$. 

For example, this may occur when $H^*+h_M$ is relatively small, while $H(\theta)-\theta h_m$ is relatively large. Let $p=p'=2$, $\theta=\frac12$, $d(u) \equiv 1$, $h(u)=-1-u$ on $[0,1]$, and  
\be
g(u)=
\begin{cases}
0, & 0\le u\le \frac12,\\[1mm]
\frac{78}{25}\left(u-\frac12\right)(1-u), & \frac12<u<1.
\end{cases}
\notag
\ee
Then $h_m=-2$, $h_M=-1$, and $H^*=\frac{5}{4}<-h_m$. A direct computation shows that both \eqref{concomex_tvp} and \eqref{assDZnon} are satisfied. Consequently, the corresponding wave speed $c^*$ necessarily satisfies \eqref{regime}.
\end{example}

\end{document}